\numberwithin{equation}{section}
\theoremstyle{plain}
\newtheorem{theorem}{Theorem}[section]
\newtheorem{lemma}[theorem]{Lemma}
\theoremstyle{definition}
\newtheorem{remark}[theorem]{Remark}
\newtheorem{?}[theorem]{Problem}
\def\boxit#1{\leavevmode\hbox{\vrule\vtop{\vbox{\kern.33333pt\hrule
    \kern1pt\hbox{\kern1pt\vbox{#1}\kern1pt}}\kern1pt\hrule}\vrule}}
\newcommand{\f}[1]{\ifthenelse{\equal{#1}{1}}{(q;q)_\infty}{(q^{#1};q^{#1})_{\infty}}}
\begin{document}
\title[Elementary proof of congruences modulo 25]{Elementary proof of congruences modulo 25 for broken $k$-diamond partitions}

\author[S. Chern]{Shane Chern}
\address[Shane Chern]{Department of Mathematics, The Pennsylvania State University, University Park, PA 16802, USA}
\email{shanechern@psu.edu}

\author[D. Tang]{Dazhao Tang}

\address[Dazhao Tang]{College of Mathematics and Statistics, Chongqing University, Huxi Campus LD206, Chongqing 401331, P.R. China}
\email{dazhaotang@sina.com}

\date{\today}

\begin{abstract}
Let $\Delta_{k}(n)$ denote the number of $k$-broken diamond partitions of $n$. Quite recently, the second author proved an infinite family of congruences modulo 25 for $\Delta_{k}(n)$ with the help of modular forms. In this paper, we aim to provide an elementary proof of this result.
\end{abstract}

\keywords{Partitions, broken $k$-diamond partitions, congruences.}

\subjclass[2010]{05A17, 11P83.}

%

\maketitle

\section{Introduction}\label{sec1}
The notion of broken $k$-diamond partitions was introduced by Andrews and Paule \cite{AP2007} in 2007. They showed that the generating function of $\Delta_{k}(n)$, the number of broken $k$-diamond partitions of $n$, is given by
\begin{align*}
\sum_{n=0}^{\infty}\Delta_{k}(n)q^{n}=\dfrac{(q^{2};q^{2})_{\infty}(q^{2k+1};q^{2k+1})_{\infty}}{(q;q)_{\infty}^3(q^{2(2k+1)};q^{2(2k+1
)})_{\infty}}.
\end{align*}

Throughout this paper, we assume that $|q|<1$ and adopt the customary $q$-series notation:
\begin{align*}
(a;q)_{\infty} &:=\prod_{n=0}^{\infty}(1-aq^{n}).
\end{align*}

The following two congruences modulo $5$ were obtained by S.H.~Chan \cite{Chan2008} and later rediscovered by Radu \cite{Rad2015}:
\begin{align*}
\Delta_{2}(25n+14) &\equiv0\pmod{5},\\
\Delta_{2}(25n+24) &\equiv0\pmod{5}.
\end{align*}
In fact, Chan extended these congruences to
\begin{align}
\Delta_{2}\left(5^{\alpha+1}n+\dfrac{11\times5^{\alpha}+1}{4}\right) &\equiv0\pmod{5},\label{mod 5 family-1}\\
\Delta_{2}\left(5^{\alpha+1}n+\dfrac{19\times5^{\alpha}+1}{4}\right) &\equiv0\pmod{5}.\label{mod 5 family-2}
\end{align}
Hirschhorn \cite{Hir2017} subsequently gave simple proofs of \eqref{mod 5 family-1} and \eqref{mod 5 family-2}.

Furthermore, other infinite families of congruences modulo $5$ satisfied by $\Delta_{2}(n)$ have been discovered by many authors. The interested readers may refer to Radu \cite{Rad2015} and Xia \cite{Xia2017}.

Quite recently, with the help of modular forms, the second author \cite[Theorem 2]{Tang2018} proved the following infinite family of congruences modulo 25 for $\Delta_{k}(n)$.

\begin{theorem}\label{THM:mod 25}
For all $n\geq0$,
\begin{align}
\Delta_{k}(125n+99) &\equiv0\pmod{25}, \quad \emph{if}\quad k\equiv62\pmod{125}.\label{con:mod 25}
\end{align}
\end{theorem}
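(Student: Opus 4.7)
The plan is to first isolate the \emph{$k$-independent} content of the congruence and then attack the resulting statement by iterated $5$-dissection.

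Since $k \equiv 62 \pmod{125}$, the integer $m := 2k+1$ has the form $125\ell$ with $\ell \geq 1$ odd. Hence
\begin{equation*}
\frac{(q^m;q^m)_\infty}{(q^{2m};q^{2m})_\infty} = \prod_{j\geq 0}\bigl(1 - q^{125\ell(2j+1)}\bigr)
\end{equation*}
is a power series in $q^{125}$ alone; write it as $h(q^{125}) = \sum_{j\geq 0} h_j\, q^{125 j}$. Define the coefficients $b(n)$ by
\begin{equation*}
\sum_{n\geq 0} b(n)\, q^n = \frac{(q^2;q^2)_\infty}{(q;q)_\infty^3}.
\end{equation*}
The generating function for $\Delta_k$ factors as $h(q^{125})\cdot\sum_n b(n) q^n$, so extracting the coefficient of $q^{125n+99}$ gives
\begin{equation*}
\Delta_k(125n+99) = \sum_{j\geq 0} h_j\, b\bigl(125(n-j)+99\bigr).
\end{equation*}
Theorem \ref{THM:mod 25} therefore reduces to the $k$-free assertion that $b(125n+99) \equiv 0 \pmod{25}$ for every $n \geq 0$.

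To establish this claim I would mimic the classical elementary Hirschhorn-Watson treatment of Ramanujan's congruence $p(25n+24) \equiv 0 \pmod{25}$. Observe that $99 \equiv 4 \pmod 5$ and $99 \equiv 24 \pmod{25}$, so at the first two scales we land exactly on Ramanujan's residues. Accordingly, I would perform three successive $5$-dissections of $\sum_n b(n) q^n$: first isolate $\sum_n b(5n+4) q^n$ using Ramanujan's $5$-dissection of $(q;q)_\infty$ together with the analogous dissection of $(q^2;q^2)_\infty$ (both expressible via the Rogers-Ramanujan function $R(q)$); next apply a second $5$-dissection to obtain $\sum_n b(25n+24) q^n$; and finally a third to reach $\sum_n b(125n+99) q^n$. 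The goal is to read off an overall factor of $25$ in this last series.

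The principal obstacle is the passage from modulo $5$ to modulo $25$ information while iterating. At each stage one must retain error terms of order $25$, so the argument rests not only on $(q;q)_\infty^5 \equiv (q^5;q^5)_\infty \pmod 5$ but on the sharpened congruence $(q;q)_\infty^{25} \equiv (q^5;q^5)_\infty^5 \pmod{25}$, together with the classical algebraic identities for $R(q)$ due to Ramanujan and Watson that express symmetric combinations such as $R(q)^{-5} - q^5 R(q)^5$ as eta-quotients of level $5$. These identities allow one to repeatedly fold intermediate expressions back into products of $(q;q)_\infty$, $(q^5;q^5)_\infty$, and $(q^{25};q^{25})_\infty$, so that at the third stage a factor of $5^2 = 25$ should present itself, completing the proof.
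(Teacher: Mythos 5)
Your opening reduction is exactly the paper's: since $k\equiv 62\pmod{125}$ forces $2k+1$ to be a multiple of $125$, the factor $(q^{2k+1};q^{2k+1})_\infty/(q^{2(2k+1)};q^{2(2k+1)})_\infty$ is a series in $q^{125}$, and the theorem reduces to showing that the coefficients $c(n)$ of $E_2/E_1^3$ satisfy $c(125n+99)\equiv 0\pmod{25}$. The iterated $5$-dissection strategy you then outline is also the paper's. But everything after the reduction is a promissory note, and the one concrete prediction you make about how the argument closes is not what actually happens. The paper's first dissection (quoted from Baruah--Begum, \eqref{gf:5n+4}) has leading coefficient $41$, which is not divisible by $5$ at all; no number of further dissections of that term will ever surface "an overall factor of $25$." The proof instead works by a different mechanism: in each dissection all but one or two coefficients are already $\equiv 0\pmod{25}$, and the surviving terms --- $860\equiv 10$ after the first dissection, $35\equiv 10$ after the second --- carry an explicit factor of $5$ which, combined with $E_1^5\equiv E_5\pmod 5$, reduces them to eta-quotients supported away from the residue class being extracted. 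That "the leftover series misses the target arithmetic progression" step is the crux, and it is absent from your sketch.

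Two further points. First, the second dissection is not a routine repetition of the first: the paper has to isolate the cubic-partition-pair factor $1/(E_1^2E_2^2)$ and prove a new exact identity for $\sum b(5n+4)q^n$ (Lemma \ref{le:cpp-dis}), which requires the two-variable machinery in $x=1/R(q)$, $y=1/R(q^2)$ and the $P(\alpha,\beta)$ recurrences --- substantially more than "Ramanujan's $5$-dissection of $(q;q)_\infty$ together with the analogous dissection of $(q^2;q^2)_\infty$." Your appeal to Watson's treatment of $p(25n+24)$ does not transfer, because the generating function here is $E_2/E_1^3$, not $1/E_1$, and the sharpened congruence $E_1^{25}\equiv E_5^5\pmod{25}$ you invoke is not in fact what is used; the mod-$25$ bookkeeping is handled by exact identities plus the weaker $E_1^5\equiv E_5\pmod 5$ applied to terms that already carry a factor of $5$. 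Second, only two full dissections are needed (taking $99\to 19\to 3$ through the residue $4$ twice); the third stage degenerates to observing that a single eta-quotient has no terms $q^{5n+3}$. So the skeleton of your plan is sound, but the proof's actual content --- the two exact dissection identities and the missing-residue-class argument --- still has to be supplied.
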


Our main purpose of this paper is to provide an elementary proof of Theorem \ref{THM:mod 25}.

\section{Preliminaries}
For notational convenience, we denote
\begin{align*}
E_{j}=(q^{j};q^{j})_{\infty}.
\end{align*}
We also write
\begin{align*}
R(q) :=\dfrac{(q;q^{5})_{\infty}(q^{4};q^{5})_{\infty}}{(q^{2};q^{5})_{\infty}(q^{3};q^{5})_{\infty}}.
\end{align*}

From \cite[Eq.~(8.4.4)]{Hir}, one has the following $5$-dissection identity.

\begin{lemma}
\begin{align}
\dfrac{1}{E_{1}} &=\dfrac{E_{25}^{5}}{E_{5}^{6}}\Bigg(\dfrac{1}{R(q^{5})^{4}}+\dfrac{q}{R(q^{5})^{3}}+\dfrac{2q^{2}}{R(q^{5})^{2}}+\dfrac{3q^{3}}{R(q^{5})}+5q^{4}\notag\\
 &\quad\quad\quad\quad-3q^{5}R(q^{5})+2q^{6}R(q^{5})^{2}-q^{7}R(q^{5})^{3}+q^{8}R(q^{5})^{4}\Bigg).\label{Hir dissection}
\end{align}
\end{lemma}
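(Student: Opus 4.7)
My plan is to derive the stated $5$-dissection of $1/E_1$ from two classical Ramanujan identities for the Rogers--Ramanujan continued fraction $R(q)$. Throughout, write $T := R(q^5)$ to keep notation compact.

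The first ingredient is Ramanujan's identity
\[
E_1 = E_{25}\bigl(T^{-1} - q - q^2\,T\bigr),
\]
i.e.\ $(q;q)_{\infty}/(q^{25};q^{25})_{\infty} = R(q^5)^{-1} - q - q^2 R(q^5)$, which appears earlier in Chapter~8 of Hirschhorn's book. The second ingredient is the companion identity
\[
\frac{E_1^6}{E_5^6} = R(q)^{-5} - 11q - q^2\,R(q)^5,
\]
whose $q\mapsto q^5$ substitution yields
\[
\frac{E_5^6}{E_{25}^6} = T^{-5} - 11q^5 - q^{10}\,T^5.
\]

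From the first identity, $1/E_1 = \bigl[E_{25}(T^{-1} - q - q^2\,T)\bigr]^{-1}$. The key step is to rationalize this denominator by exhibiting a Laurent polynomial $\Pi(q,T)$ in $T$ such that
\[
(T^{-1} - q - q^2\,T)\,\Pi(q,T) = T^{-5} - 11q^5 - q^{10}\,T^5.
\]
The natural candidate is
\[
\Pi(q,T) = T^{-4} + q\,T^{-3} + 2q^2\,T^{-2} + 3q^3\,T^{-1} + 5q^4 - 3q^5\,T + 2q^6\,T^2 - q^7\,T^3 + q^8\,T^4,
\]
and I would verify it by direct expansion: after multiplying $(T^{-1} - q - q^2\,T)\,\Pi(q,T)$ and collecting like powers of $T$, every coefficient vanishes except those at $T^{-5}$, $T^0$, and $T^5$, which come out to $1$, $-11q^5$, and $-q^{10}$ respectively. (Equivalently, $\Pi(q,T)$ can be generated by polynomial long division of $T^{-5} - 11q^5 - q^{10}\,T^5$ by $T^{-1} - q - q^2\,T$, or by solving a small triangular linear system for its coefficients.)

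Combining both steps,
\[
\frac{1}{E_1}
= \frac{\Pi(q,T)}{E_{25}\bigl(T^{-5} - 11q^5 - q^{10}\,T^5\bigr)}
= \frac{\Pi(q,T)}{E_{25}\cdot E_5^6/E_{25}^6}
= \frac{E_{25}^5}{E_5^6}\,\Pi(q,T),
\]
which is precisely the claimed $5$-dissection. The only nontrivial computation is the nine-term cancellation behind the factorization; it is a bounded finite check and is the sole obstacle I anticipate when writing out the full proof.
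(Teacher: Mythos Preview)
Your derivation is correct. The paper does not actually prove this lemma; it merely cites it as \cite[Eq.~(8.4.4)]{Hir} from Hirschhorn's book. What you have supplied is precisely the standard proof one finds there: start from the $5$-dissection $E_1=E_{25}\bigl(R(q^5)^{-1}-q-q^2R(q^5)\bigr)$, rationalize using Ramanujan's companion identity $R(q)^{-5}-11q-q^2R(q)^5=E_1^6/E_5^6$ (with $q\mapsto q^5$), and verify the nine-term factorization, which you have checked. So there is nothing to compare---you have filled in the proof that the paper outsources to a reference, via the expected route.
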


We now absorb the ideas of \cite{BB2018} with some refinements. The first ingredient from \cite{BB2018} is the following three relations.

\begin{lemma}[Lemma 1.3, \cite{BB2018}]
Let $x=\dfrac{1}{R(q)}$ and $y=\dfrac{1}{R(q^{2})}$, then
\begin{align}
\dfrac{x^{2}}{y}-\dfrac{y}{x^{2}} &=\dfrac{4q}{K},\label{iden2}\\
xy^{2}-\dfrac{q^{2}}{xy^{2}} &=K,\label{iden1}\\
\dfrac{y^{3}}{x}+\dfrac{q^{2}x}{y^{3}} &=K-2q+\dfrac{4q^{2}}{K}.\label{iden3}
\end{align}
where $K=\dfrac{E_{2}E_{5}^{5}}{E_{1}E_{10}^{5}}$.
\end{lemma}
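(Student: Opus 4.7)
The plan is first to reduce each identity to a relation among eta products, next to prove (iden1) and (iden2) as classical theta identities for the Rogers--Ramanujan functions, and finally to derive (iden3) algebraically from the first two. Writing $a_k := (q^k;q^{10})_{\infty}$, the Jacobi triple product yields
$$x = \frac{a_2 a_3 a_7 a_8}{a_1 a_4 a_6 a_9}, \qquad y = \frac{a_4 a_6}{a_2 a_8}, \qquad K = \frac{a_5^4}{a_1 a_3 a_7 a_9}.$$
In these coordinates, each of (iden1) and (iden2), after clearing denominators, becomes a specific polynomial identity among the $a_k$ (times powers of $q$); for instance, (iden1) reads
$$\frac{a_3 a_4 a_6 a_7}{a_1 a_2 a_8 a_9} - q^2 \cdot \frac{a_1 a_2 a_8 a_9}{a_3 a_4 a_6 a_7} = \frac{a_5^4}{a_1 a_3 a_7 a_9}.$$

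The main work is to establish (iden1) and (iden2) as genuine Ramanujan-type identities for $R(q)$ and $R(q^2)$. A natural route is to use the $5$-dissection of $\psi(q) = \sum_{n \ge 0} q^{n(n+1)/2}$ (or $f(-q)$), combined with Jacobi's triple product, to rewrite both sides as theta-series sums over residues mod $5$ (or mod $10$) and then match coefficients. Alternatively, each identity can be verified as an equality of modular functions on $\Gamma_{0}(10)$ by checking divisors and leading $q$-expansion coefficients against a Sturm-type bound. Either route reduces the proof of (iden1) and (iden2) to a finite check.

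Once (iden1) and (iden2) are in hand, identity (iden3) is a purely algebraic consequence. Setting $A := xy^2$ and $B := x^2/y$, the first two identities become $A^2 - KA - q^2 = 0$ and $KB^2 - 4qB - K = 0$. Since $A, B, K \to 1$ as $q \to 0$, the correct branches are
$$A = \frac{K + s}{2}, \qquad B = \frac{2q + s}{K}, \qquad \text{where } s := \sqrt{K^2 + 4q^2}.$$
Using the identities $(2q+s)(s-2q) = K^2$ and $(K+s)(K-s) = -4q^2$ to rationalize, one finds
$$\frac{A}{B} = \frac{(K+s)(s-2q)}{2K}, \qquad \frac{q^2 B}{A} = \frac{-(2q+s)(K-s)}{2K};$$
expanding and summing, the $s$-dependent parts cancel, leaving
$$\frac{y^3}{x} + \frac{q^2 x}{y^3} = \frac{A}{B} + \frac{q^2 B}{A} = \frac{K^2 - 2qK + 4q^2}{K} = K - 2q + \frac{4q^2}{K}.$$
Thus the hard step is the theta-function proof of (iden1) and (iden2); identity (iden3) follows automatically from them by the branch analysis above.
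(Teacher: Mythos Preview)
The paper does not actually prove this lemma: it is quoted verbatim as Lemma~1.3 of Baruah and Begum \cite{BB2018} and used as a black box, so there is no ``paper's own proof'' to compare against. Your proposal therefore goes further than the paper does.

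Your algebraic derivation of \eqref{iden3} from \eqref{iden1} and \eqref{iden2} is correct and rather elegant. Setting $A=xy^{2}$, $B=x^{2}/y$ and $s=\sqrt{K^{2}+4q^{2}}$, selecting the branches by the normalization $A,B,K\to 1$ as $q\to 0$, and then rationalizing via $(s-2q)(s+2q)=K^{2}$ and $(s-K)(s+K)=4q^{2}$ does indeed make the $s$-terms cancel in $A/B+q^{2}B/A$, leaving exactly $K-2q+4q^{2}/K$. That part is a complete argument.

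The remaining step, namely establishing \eqref{iden1} and \eqref{iden2} themselves, is only sketched. Both routes you mention (theta-series dissection of $\psi(q)$ or $f(-q)$, or modular-function verification on $\Gamma_{0}(10)$ up to a Sturm bound) are standard and would succeed, since every quantity in sight is an eta quotient of level dividing $10$; but as written you have not carried either one out, so the proposal is a correct outline rather than a self-contained proof. If you want a fully elementary version, the reference \cite{BB2018} gives direct theta-function arguments for these two relations.
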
\label{key lemma}

Now, for $\alpha\in\mathbb{Z}_{\ge 0}$ and $\beta\in\mathbb{Z}$, we define
\begin{equation}\label{eq:P-def}
P(\alpha,\beta):=x^{\alpha+2\beta}y^{2\alpha-\beta}+\dfrac{(-1)^{\alpha+\beta}q^{2\alpha}}{x^{\alpha+2\beta}y^{2\alpha-\beta}}.
\end{equation}
It is not hard to observe that
\begin{align}
P(0,0)&=2,\label{eq:p00}\\
P(0,1)&=\dfrac{x^{2}}{y}-\dfrac{y}{x^{2}} =\dfrac{4q}{K},\label{eq:p01}\\
P(1,0)&=xy^{2}-\dfrac{q^{2}}{xy^{2}} =K,\label{eq:p10}\\
P(1,-1)&=\dfrac{y^{3}}{x}+\dfrac{q^{2}x}{y^{3}} =K-2q+\dfrac{4q^{2}}{K}.\label{eq:p1-1}
\end{align}

With the help of the following recurrence relations along with the initial conditions \eqref{eq:p00}--\eqref{eq:p1-1}, one may easily express $P(\alpha,\beta)$ in terms of $K$ and $q$ for arbitrary $\alpha\in\mathbb{Z}_{\ge 0}$ and $\beta\in\mathbb{Z}$.

\begin{lemma}\label{eq:le:rec}
For $\alpha\in\mathbb{Z}_{\ge 0}$ and $\beta\in\mathbb{Z}$, 
\begin{align}
P(\alpha,\beta+1)&=\dfrac{4q}{K}P(\alpha,\beta)+P(\alpha,\beta-1),\label{eq:P-rec-1}\\
P(\alpha+2,0)&=KP(\alpha+1,0)+q^2 P(\alpha,0),\label{eq:P-rec-2}\\
P(\alpha+2,-1)&=\left(K-2q+\dfrac{4q^{2}}{K}\right)P(\alpha+1,0)-q^2 P(\alpha,1).\label{eq:P-rec-3}
\end{align}
\end{lemma}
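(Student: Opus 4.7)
The plan is to pass from the variables $x, y$ to the pair $u := xy^2$ and $r := x^2/y$, and to exploit \eqref{iden1}, which reads $u - q^2/u = K$. This shows that $u$ and $v := -q^2/u$ are the two roots of the quadratic $t^2 - Kt - q^2 = 0$, so that $u+v = K$ and $uv = -q^2$. The auxiliary identity $v^\alpha = (-1)^\alpha q^{2\alpha}/u^\alpha$ absorbs the sign factor $(-1)^{\alpha+\beta}$ appearing in \eqref{eq:P-def}, and one verifies the clean representations
\[
P(\alpha,0) = u^\alpha + v^\alpha, \qquad P(\alpha,1) = u^\alpha r - \frac{v^\alpha}{r}, \qquad P(\alpha,-1) = \frac{u^\alpha}{r} - v^\alpha r.
\]

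For \eqref{eq:P-rec-1}, I would work directly from the definition. Writing $A = x^{\alpha+2\beta} y^{2\alpha-\beta}$ and $B = (-1)^{\alpha+\beta} q^{2\alpha}/A$, one checks $P(\alpha,\beta+1) = Ar - B/r$ and $P(\alpha,\beta-1) = A/r - Br$, so that the difference collapses to $(A+B)(r - 1/r) = P(\alpha,\beta)(r - 1/r)$. Identity \eqref{iden2} gives $r - 1/r = 4q/K$, and \eqref{eq:P-rec-1} follows.

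For \eqref{eq:P-rec-2}, the quadratic $t^2 = Kt + q^2$ satisfied by each of $u, v$ yields $t^{\alpha+2} = K t^{\alpha+1} + q^2 t^\alpha$; summing over $t \in \{u, v\}$ and invoking the formula $P(\alpha, 0) = u^\alpha + v^\alpha$ delivers the claim immediately.

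For \eqref{eq:P-rec-3}, I would use \eqref{eq:p1-1} to rewrite the prefactor $K - 2q + 4q^2/K$ as $u/r - vr$, and then expand $(u/r - vr)(u^{\alpha+1} + v^{\alpha+1}) - q^2(u^\alpha r - v^\alpha/r)$. The two cross terms each carry a factor $uv = -q^2$ and simplify to $q^2 r u^\alpha$ and $-q^2 v^\alpha/r$, which cancel exactly against the contributions from $q^2 P(\alpha,1)$; what remains is $u^{\alpha+2}/r - v^{\alpha+2} r = P(\alpha+2,-1)$. I do not foresee a genuine obstacle here: the only conceptual step is recognizing that $u = xy^2$ satisfies a monic quadratic over $\mathbb{Z}[K, q^2]$, after which each recurrence reduces to routine manipulation in $\mathbb{Z}[u, v, r, 1/r]$ with the single relation $uv = -q^2$.
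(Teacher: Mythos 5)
Your proof is correct, and it is essentially the same argument as the paper's: the paper multiplies $P(\alpha,\beta)$ by $P(0,1)$, and $P(\alpha+1,0)$ by $P(1,0)$ and $P(1,-1)$, observing each product telescopes into the two $P$'s on the right-hand side, which is exactly your computation after the substitution $u=xy^2$, $v=-q^2/u$, $r=x^2/y$. Your change of variables is a clean repackaging that automates the sign bookkeeping (and exposes \eqref{eq:P-rec-2} as a Lucas-type recurrence for power sums of the roots of $t^2-Kt-q^2$), but the underlying identities invoked, \eqref{iden2}, \eqref{iden1}, and \eqref{iden3}, and the algebra are the same.
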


\begin{proof}
We first notice that
\begin{align*}
P(\alpha,\beta)P(0,1)&=\left(x^{\alpha+2\beta}y^{2\alpha-\beta}+\dfrac{(-1)^{\alpha+\beta}q^{2\alpha}}{x^{\alpha+2\beta}y^{2\alpha-\beta}}\right)\left(\dfrac{x^{2}}{y}-\dfrac{y}{x^{2}}\right)\\
&=\left(x^{\alpha+2(\beta+1)}y^{2\alpha-(\beta+1)}+\dfrac{(-1)^{\alpha+(\beta+1)}q^{2\alpha}}{x^{\alpha+2(\beta+1)}y^{2\alpha-(\beta+1)}}\right)\\
&\qquad-\left(x^{\alpha+2(\beta-1)}y^{2\alpha-(\beta-1)}+\dfrac{(-1)^{\alpha+(\beta-1)}q^{2\alpha}}{x^{\alpha+2(\beta-1)}y^{2\alpha-(\beta-1)}}\right)\\
&=P(\alpha,\beta+1)-P(\alpha,\beta-1).
\end{align*}
This gives \eqref{eq:P-rec-1}.

Next, it follows from \eqref{eq:P-def} and \eqref{eq:p10} that
\begin{align*}
P(\alpha+1,0)P(1,0)&=\left(x^{\alpha+1}y^{2(\alpha+1)}+\dfrac{(-1)^{\alpha+1}q^{2(\alpha+1)}}{x^{\alpha+1}y^{2(\alpha+1)}}\right)\left(xy^{2}-\dfrac{q^{2}}{xy^{2}}\right)\\
&=\left(x^{\alpha+2}y^{2(\alpha+2)}+\dfrac{(-1)^{\alpha+2}q^{2(\alpha+2)}}{x^{\alpha+2}y^{2(\alpha+2)}}\right)
-q^2 \left(x^{\alpha}y^{2\alpha}+\dfrac{(-1)^{\alpha}q^{2\alpha}}{x^{\alpha}y^{2\alpha}}\right)\\
&=P(\alpha+2,0)-q^2 P(\alpha,0),
\end{align*}
which is equivalent to \eqref{eq:P-rec-2}.

At last, we have
\begin{align*}
P(\alpha+1,0)P(1,-1)&=\left(x^{\alpha+1}y^{2(\alpha+1)}+\dfrac{(-1)^{\alpha+1}q^{2(\alpha+1)}}{x^{\alpha+1}y^{2(\alpha+1)}}\right)\left(\dfrac{y^{3}}{x}+\dfrac{q^{2}x}{y^{3}}\right)\\
&=\left(x^{\alpha}y^{2\alpha+5}+\dfrac{(-1)^{\alpha+1}q^{2\alpha+4}}{x^{\alpha}y^{2\alpha+5}}\right)+ q^2 \left(x^{\alpha+2}y^{2\alpha-1}+\dfrac{(-1)^{\alpha+1}q^{2\alpha}}{x^{\alpha+2}y^{2\alpha-1}}\right)\\
&=P(\alpha+2,-1)+q^2 P(\alpha,1).
\end{align*}
This yields \eqref{eq:P-rec-3}.
\end{proof}

The other ingredient we request from \cite{BB2018} states as follows:

\begin{lemma}
If $K=\dfrac{E_{2}E_{5}^{5}}{E_{1}E_{10}^{5}}$, then
\begin{align}\label{simp iden}
K-3q-\dfrac{4q^{2}}{K}=\dfrac{E_{1}^{2}E_{2}^{2}}{E_{5}^{2}E_{10}^{2}}.
\end{align}
\end{lemma}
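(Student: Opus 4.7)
The plan is to convert \eqref{simp iden} into an equivalent polynomial identity and then verify it by eta-quotient manipulation. First, multiply through by $K$ to obtain
\[
K^{2}-3qK-4q^{2}\;=\;K\cdot\frac{E_{1}^{2}E_{2}^{2}}{E_{5}^{2}E_{10}^{2}}\;=\;\frac{E_{1}E_{2}^{3}E_{5}^{3}}{E_{10}^{7}}.
\]
The left-hand side factors nicely as $(K-4q)(K+q)$. Setting $u:=xy^{2}$ and invoking \eqref{iden1} in the form $K=u-q^{2}/u$, we have $uK=u^{2}-q^{2}$, and therefore
\[
K+q\;=\;\frac{u^{2}+qu-q^{2}}{u},\qquad K-4q\;=\;\frac{u^{2}-4qu-q^{2}}{u}.
\]
So the identity becomes the polynomial-in-$u$ statement
\[
(u^{2}+qu-q^{2})(u^{2}-4qu-q^{2})\;=\;u(u^{2}-q^{2})\cdot\frac{E_{1}^{2}E_{2}^{2}}{E_{5}^{2}E_{10}^{2}}.
\]

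The next step is to feed in an eta-quotient expression for $u^{2}=1/\bigl(R(q)^{2}R(q^{2})^{4}\bigr)$. Combining the definition of $R(q)$ with the factorisation $E_{1}/E_{5}=(q;q^{5})_{\infty}(q^{2};q^{5})_{\infty}(q^{3};q^{5})_{\infty}(q^{4};q^{5})_{\infty}$, one gets the clean identity
\[
\bigl[(q;q^{5})_{\infty}(q^{4};q^{5})_{\infty}\bigr]^{2}\;=\;R(q)\cdot\frac{E_{1}}{E_{5}},
\]
together with its analogue for $R(q^{2})$. Multiplying these produces an eta-quotient for $u^{2}$; upon substitution the displayed polynomial identity should collapse, after some bookkeeping, to a single monomial equality in $E_{1}, E_{2}, E_{5}, E_{10}$ with matching powers of $q$.

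The main obstacle I foresee is that only $u^{2}$, and not $u$ itself, admits a single-term eta-quotient form; since the polynomial identity contains terms of odd degree in $u$ (from the $qu$ cross terms), the substitution must be carried out carefully, possibly after squaring both sides to eliminate the odd-degree contributions and tracking the extraneous factor that is thereby introduced. As a lighter alternative, one may interpret \eqref{simp iden} as an equality of weight-zero modular functions on $\Gamma_{0}(10)$ whose pole structures at the four cusps agree, and verify equality by comparing a sufficient number of initial Fourier coefficients via the Sturm bound. The present authors circumvent these difficulties entirely by importing the identity from \cite{BB2018}.
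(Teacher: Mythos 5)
There is a genuine gap at the heart of your argument. The preliminary algebra is fine: multiplying \eqref{simp iden} by $K$, factoring $K^2-3qK-4q^2=(K-4q)(K+q)$, and using \eqref{iden1} to write everything in terms of $u=xy^2$ are all correct reductions. But the step you rely on to finish --- ``feed in an eta-quotient expression for $u^{2}=1/\bigl(R(q)^{2}R(q^{2})^{4}\bigr)$'' --- is not available. The identity $\bigl[(q;q^{5})_{\infty}(q^{4};q^{5})_{\infty}\bigr]^{2}=R(q)\,E_{1}/E_{5}$ is true, but it does not convert $R(q)$ (let alone $R(q)^2R(q^2)^4$) into a quotient of the functions $E_{j}$: the factor $(q;q^{5})_{\infty}(q^{4};q^{5})_{\infty}$ is a \emph{generalized} eta product that cannot be expressed in terms of $E_1,E_2,E_5,E_{10}$ alone. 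Multiplying your identity by its $q\mapsto q^{2}$ analogue produces $R(q)R(q^{2})\,E_1E_2/(E_5E_{10})$ on one side and a product of generalized eta factors on the other --- neither the exponents match $u^2=x^2y^4$ nor do the generalized factors disappear. Only the special \emph{combinations} $u-q^{2}/u$, $x^{2}/y-y/x^{2}$, etc.\ (i.e.\ \eqref{iden1}--\eqref{iden3}) are eta quotients; $u$ and $u^{2}$ individually are not. Consequently the promised ``collapse to a single monomial equality'' will not occur, and the proof does not close. (Your squaring trick does legitimately remove the odd powers of $u$, at the cost of a sign check, but it does not cure the underlying problem.)

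Your fallback --- a Sturm-bound verification --- can be made to work, but only after clearing denominators and renormalizing: note that $3q$ by itself is not a modular function on $\Gamma_{0}(10)$, so one must first multiply through by $K$ and account for the $q$-powers carried by the eta prefactors before comparing cusp expansions and initial coefficients. That is a different (computational) argument and you have not carried it out. For comparison, the paper does not derive \eqref{simp iden} from scratch at all: it is obtained directly by combining Eqs.~(1.7) and (2.2) of \cite{BB2018}, which already express $K$ and the relevant quotient $E_{1}^{2}E_{2}^{2}/(E_{5}^{2}E_{10}^{2})$ in compatible forms. If you want a self-contained proof, you should either import those two identities explicitly or complete the modular-function verification honestly; as written, the key substitution in your argument fails.
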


\begin{proof}
Eq.~\eqref{simp iden} is an immediate consequence of \cite[Eqs.~(1.7) and (2.2)]{BB2018}.
\end{proof}

\section{Proof of Theorem \ref{THM:mod 25}}

Let
\begin{align*}
\sum_{n=0}^{\infty}c(n)q^{n}=\dfrac{E_{2}}{E_{1}^{3}}.
\end{align*}
We shall show
\begin{theorem}\label{th:c125n+99}
For any integer $n\geq0$,
\begin{align}
c(125n+99) &\equiv0\pmod{25}.\label{con:mod 25-2}
\end{align}
\end{theorem}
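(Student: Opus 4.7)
The plan is to iteratively apply the $5$-dissection identity \eqref{Hir dissection} three times to $\sum_n c(n)q^n = E_2/E_1^3$, extracting successively the residue classes $4$, $4$, and $3$ modulo $5$ (since $99 = 4 + 5\cdot 4 + 25\cdot 3$), and then to exploit the $P(\alpha,\beta)$-machinery of Section~2 together with \eqref{simp iden} to reduce the resulting generating function to a form from which divisibility by $25$ is transparent.

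Concretely, in the first dissection I would cube \eqref{Hir dissection}, multiply by $E_2$, and collect the $q^{5m+4}$ terms. Because $E_2$ is a power series in $q^2$ and the outer factor $E_{25}^{15}/E_5^{18}$ is in $q^5$, only a manageable list of combinations of $R(q^5)$-monomials from the cubed bracket can contribute, and they are straightforward to enumerate. After replacing $q^5\to q$ and dividing by $q^4$, $\sum c(5m+4)q^m$ emerges as a Laurent polynomial in $R(q) = 1/x$ with coefficients in the $E_j$ and in $q$; when the procedure is repeated, the $q^2$-structure of the $E_2$ factor naturally brings in $R(q^2) = 1/y$ as well. Performing the $5m+4$ and then the $5m+3$ extractions on the two successive outputs yields $\sum_{n\ge 0} c(125n+99)q^n$ as a closed expression in $x$, $y$, $q$, and the $E_j$'s.

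At every stage the monomials $x^{\alpha+2\beta}y^{2\alpha-\beta}$ that arise pair with their companions $(-1)^{\alpha+\beta}q^{2\alpha}/(x^{\alpha+2\beta}y^{2\alpha-\beta})$ by the sign-symmetry intrinsic to \eqref{Hir dissection}, so the cumulative expression can be rewritten as a $\mathbb{Z}[q]$-linear combination of the $P(\alpha,\beta)$'s. The recurrences \eqref{eq:P-rec-1}--\eqref{eq:P-rec-3} of Lemma~\ref{eq:le:rec} then collapse every $P(\alpha,\beta)$ into a Laurent polynomial in $K$ over $\mathbb{Z}[q]$, and \eqref{simp iden} in the form $K-3q-4q^2/K = E_1^2E_2^2/(E_5^2E_{10}^2)$ absorbs the awkward mixed $K/q$-combinations into the ``nice'' $\eta$-quotient factor. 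Reading off the resulting integer coefficients should reveal that each is divisible by $25$, establishing \eqref{con:mod 25-2}. Note that Theorem~\ref{THM:mod 25} will follow at once, because when $2k+1 = 125m$ with $m$ odd the factor $E_{125m}/E_{250m}$ is a power series in $q^{125m}$, so the coefficient of $q^{125n+99}$ in $\Delta_k(q)$ is a $\mathbb{Z}$-linear combination of values $c(125n'+99)$.

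The main obstacle is the sheer algebraic bookkeeping. Cubing \eqref{Hir dissection} already produces on the order of $25$ monomials in $R(q^5)$, and each of the three dissections compounds this; checking that the output of each step indeed consolidates into a $\mathbb{Z}[q]$-combination of $P(\alpha,\beta)$'s (with the correct pairing of signs and $q$-exponents) is where errors are most likely. A numerical cross-check of the first few coefficients of $\sum c(5m+4)q^m$, $\sum c(25m+19)q^m$, and $\sum c(125m+99)q^m$ against direct computation from the series $E_2/E_1^3$ will be essential to validate the intermediate closed forms before the final reduction via \eqref{eq:P-rec-1}--\eqref{eq:P-rec-3} and \eqref{simp iden}, which is mechanical once everything is in the right shape.
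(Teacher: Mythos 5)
Your overall strategy (iterated $5$-dissection of $E_2/E_1^3$ along the residues $4,4,3$, followed by the $P(\alpha,\beta)$/$K$ reduction and \eqref{simp iden}) has the same skeleton as the paper's, but as described it contains two concrete gaps. First, in the opening dissection you claim that because ``$E_2$ is a power series in $q^2$'' only a short list of monomials from the cubed bracket of \eqref{Hir dissection} can contribute to $q^{5m+4}$. That is not so: a power series in $q^2$ meets \emph{every} residue class modulo $5$, so extracting $q^{5m+4}$ from $E_2\cdot E_{25}^{15}E_5^{-18}(\cdots)^3$ also forces you to $5$-dissect $E_2$ itself (a separate identity, not \eqref{Hir dissection}); the paper sidesteps this entirely by quoting the Baruah--Begum identity \eqref{gf:5n+4}. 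Second, the pairing of $x^{\alpha+2\beta}y^{2\alpha-\beta}$ with its reciprocal companion, which is what makes the $P(\alpha,\beta)$ machinery applicable, is \emph{not} ``intrinsic to \eqref{Hir dissection} at every stage'': in the paper it is available only for $1/(E_1^2E_2^2)$, where the dissected object is the square of the bracket in $q$ times the square of the bracket in $q^2$ (Lemma \ref{le:cpp-dis}); in your first dissection no $y=1/R(q^2)$ occurs at all and the bracket appears to an odd power, so no such pairing exists there.

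The third structural difference is where the factor $25$ actually comes from. The paper does not carry exact closed forms through three dissections and ``read off'' divisibility at the end. It reduces modulo $25$ after each of the first two dissections, which kills all but one eta-quotient each time (the coefficients in \eqref{eq:c-1-diss} are $35,700,6875,\dots$ --- not all divisible by $25$), and the final step is that the lone surviving term $10E_5E_{10}^2/E_2$ contributes nothing modulo $25$ to the class $q^{5n+3}$ (the paper asserts that class is empty; in fact its coefficients are merely divisible by $5$ via $p(5n+4)\equiv0\pmod5$ hidden in $1/E_2$, which together with the outer factor $10$ still yields $25$). Your plan of a third full dissection would in principle terminate in the right place, but it is a far larger computation than anything in the paper, and nothing in your sketch guarantees in advance that the final integer coefficients are ``each divisible by $25$'' short of actually performing it. To repair the proposal along the paper's lines: quote or rederive \eqref{gf:5n+4}, reduce modulo $25$ to the single term $E_{10}^3/(E_1^2E_2^2E_5)$, apply the $P(\alpha,\beta)$ machinery only to $1/(E_1^2E_2^2)$ to obtain Lemma \ref{le:cpp-dis}, and finish with the modulo-$25$ reduction of \eqref{eq:c-1-diss}.
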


One readily sees that Theorem \ref{THM:mod 25} is a direct consequence of \eqref{con:mod 25-2} since if $k\equiv 62 \pmod{125}$, then $2k+1$ is a multiple of $125$.

\subsection{The first 5-dissection: Identities of Baruan and Begum}

According to \cite[Eqs.~(2.8) and (2.9)]{BB2018}, we have
\begin{align}\label{gf:5n+4}
\sum_{n=0}^{\infty}c(5n+4)q^{n} &=41\dfrac{E_{10}^{3}}{E_{1}^2E_{2}^2E_{5}}+860q\dfrac{E_{10}^{6}}{E_{1}^{5}E_{2}E_{5}^{2}}+6800q^{2}\dfrac{E_{10}^{9}}{E_{1}^{8}E_{5}^{3}}\nonumber\\
 &\quad+24000q^{3}\dfrac{E_{2}E_{10}^{12}}{E_{1}^{11}E_{5}^{4}}+32000q^{4}\dfrac{E_{2}^{2}E_{10}^{15}}{E_{1}^{14}E_{5}^{5}}.
\end{align}

Notice that the coefficients $6800$, $24000$ and $32000$ are multiples of $25$. Moreover,
$$860q\dfrac{E_{10}^{6}}{E_{1}^{5}E_{2}E_{5}^{2}}\equiv 10q\dfrac{E_{10}^6}{E_2 E_5^3} \pmod{25},$$
which contains no terms of the form $q^{5n+4}$.

Let
\begin{align}
\sum_{n=0}^{\infty}\tilde{c}(n)q^{n}=\dfrac{E_{10}^{3}}{E_{1}^2E_{2}^2E_{5}}.\label{eq:c1}
\end{align}
To show \eqref{con:mod 25-2}, it suffices to prove that
\begin{align}\label{cong:c1 mod 25}
\tilde{c}(25n+19)\equiv0\pmod{25}.
\end{align}

\subsection{The second 5-dissection: Cubic partition pairs}

We now observe that
\begin{equation}
\dfrac{1}{E_1^2E_2^2}=:\sum_{n=0}^\infty b(n)q^n\label{eq:cpp}
\end{equation}
can be treated as the generating function of \textit{cubic partition pairs} (cf.~\cite{ZZ2011}). In particular, Zhao and Zhong \cite{ZZ2011} proved that
\begin{equation}\label{eq:cpp-mod5}
b(5n+4)\equiv 0 \pmod{5}.
\end{equation}
For other interesting arithmetic properties of $b(n)$, we refer to \cite{Che2017,Hir2018,Ki2011,Li2017,LWX2018,Zh2012}.

The following dissection identity will play an important role.

\begin{lemma}\label{le:cpp-dis}
We have
\begin{align}
\sum_{n=0}^{\infty}b(5n+4)q^{n}&=35\dfrac{E_{5}^{2}E_{10}^{2}}{E_{1}^{4}E_{2}^{4}}+700q \dfrac{E_{5}^{4}E_{10}^{4}}{E_{1}^{6}E_{2}^{6}}+6875 q^2 \dfrac{E_{5}^{6}E_{10}^{6}}{E_{1}^{8}E_{2}^{8}}\notag\\
&\quad +31250 q^3 \dfrac{E_{5}^{8}E_{10}^{8}}{E_{1}^{10}E_{2}^{10}}+78125 q^4 \dfrac{E_{5}^{10}E_{10}^{10}}{E_{1}^{12}E_{2}^{12}}.\label{eq:le:cpp-dis}
\end{align}
\end{lemma}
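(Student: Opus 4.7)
The plan is to follow the strategy of \cite{BB2018}: derive a 5-dissection of $1/(E_1^2 E_2^2)$ from Hirschhorn's identity \eqref{Hir dissection}, extract the residue class $q^{5n+4}$, and simplify the result using the $P(\alpha,\beta)$-machinery of Lemma \ref{eq:le:rec} together with the identity \eqref{simp iden}.

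First I would apply \eqref{Hir dissection} to write $1/E_1$ as $E_{25}^5/E_5^6$ times a nine-term Laurent polynomial in $R(q^5)$, and correspondingly (via $q \mapsto q^2$) obtain $1/E_2$ as $E_{50}^5/E_{10}^6$ times a nine-term Laurent polynomial in $R(q^{10})$. Squaring each and multiplying gives a 5-dissection
\begin{align*}
\frac{1}{E_1^2 E_2^2} = \frac{E_{25}^{10} E_{50}^{10}}{E_5^{12} E_{10}^{12}} \sum_{a,b} c_{a,b}(q)\, R(q^5)^a\, R(q^{10})^b,
\end{align*}
where the pairs $(a,b)$ range over a finite set and each $c_{a,b}(q)$ is an explicit polynomial in $q$. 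I would then extract the terms whose $q$-exponent lies in the residue class $4 \pmod{5}$, divide by $q^4$, and substitute $q^5 \mapsto q$, so that $R(q^5) \mapsto 1/x$ and $R(q^{10}) \mapsto 1/y$.

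The key point is that, after this substitution, the Laurent monomials in $(x,y)$ that survive have exponents satisfying the index-five congruence required for membership in the image of $(\alpha,\beta) \mapsto (\alpha + 2\beta,\, 2\alpha - \beta)$, and therefore pair with their $q$-shifted reflections to form expressions $P(\alpha,\beta)$ as defined by \eqref{eq:P-def}. Applying the recurrences \eqref{eq:P-rec-1}--\eqref{eq:P-rec-3} to the initial values \eqref{eq:p00}--\eqref{eq:p1-1}, every $P(\alpha,\beta)$ that appears reduces to a Laurent polynomial in $K$ and $q$. The right-hand side then takes the shape of a rational expression in $K$ and $q$ multiplied by the eta-quotient obtained from the prefactor; invoking \eqref{simp iden} to replace $K - 3q - 4q^2/K$ by $E_1^2 E_2^2/(E_5^2 E_{10}^2)$ rewrites the whole thing as a polynomial of degree four in the single variable $E_5^2 E_{10}^2/(E_1^2 E_2^2)$, whose coefficients should match $35,\, 700,\, 6875,\, 31250,\, 78125$ as claimed.

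The main obstacle is purely bookkeeping: squaring a nine-term Laurent polynomial twice and extracting one residue class modulo five produces a substantial collection of monomials, each of which must be placed in its correct $P(\alpha,\beta)$ packet. Once organised—for example, by sorting monomials according to the lattice point $(a, b) = (\alpha + 2\beta,\, 2\alpha - \beta)$—the subsequent reductions via Lemma \ref{eq:le:rec} and the final simplification via \eqref{simp iden} are mechanical.
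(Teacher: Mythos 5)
Your proposal follows the paper's proof essentially verbatim: the paper likewise squares the two 5-dissections from \eqref{Hir dissection}, extracts the $q^{5n+4}$ terms, packages the surviving monomials into the $P(\alpha,\beta)$ of \eqref{eq:P-def}, reduces via Lemma \ref{eq:le:rec} to a Laurent polynomial in $K$ and $q$, and recognises it as $\sum_{j=0}^{4} c_j q^j (K-3q-4q^2/K)^{4-j}$ before applying \eqref{simp iden}. The only content you defer is the bookkeeping the paper also carries out explicitly; the strategy and all key ingredients match.
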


\begin{proof}
We know from \eqref{Hir dissection} that
\begin{align*}
\sum_{n=0}^{\infty}b(n)q^{n} &=\dfrac{E_{25}^{10}E_{50}^{10}}{E_{5}^{12}E_{10}^{12}}\Bigg(\dfrac{1}{R(q^{5})^{4}}+\dfrac{q}{R(q^{5})^{3}}+\dfrac{2q^{2}}{R(q^{5})^{2}}+\dfrac{3q^{3}}{R(q^{5})}+5q^{4}\\
 &\quad\quad\quad\quad\quad\quad-3q^{5}R(q^{5})+2q^{6}R(q^{5})^{2}-q^{7}R(q^{5})^{3}+q^{8}R(q^{5})^{4}\Bigg)^{2}\\
 &\quad\times\Bigg(\dfrac{1}{R(q^{10})^{4}}+\dfrac{q^{2}}{R(q^{10})^{3}}+\dfrac{2q^{4}}{R(q^{10})^{2}}+\dfrac{3q^{6}}{R(q^{10})}+5q^{8}\\
 &\quad\quad\quad-3q^{10}R(q^{10})+2q^{12}R(q^{10})^{2}-q^{14}R(q^{10})^{3}+q^{16}R(q^{10})^{4}\Bigg)^{2}.
\end{align*}

Extracting terms of the form $q^{5n+4}$, dividing by $q^4$ and replacing $q^5$ by $q$ yields
\begin{align*}
\sum_{n=0}^{\infty} &b(5n+4)q^{n}=
\dfrac{E_{5}^{10}E_{10}^{10}}{E_{1}^{12}E_{2}^{12}}\Bigg(5\left(x^{8}y^{6}+\dfrac{q^{8}}{x^{8}y^{6}}\right)
+10\left(x^{6}y^{7}-\dfrac{q^{8}}{x^{6}y^{7}}\right)\\
 &\quad+20\left(x^{4}y^{8}+\dfrac{q^{8}}{x^{4}y^{8}}\right)+40q\left(x^{7}y^{4}-\dfrac{q^{6}}{x^{7}y^{4}}\right)+100q\left(x^{5}y^{5}-\dfrac{q^{6}}{x^{5}y^{5}}\right)\\
 &\quad+80q\left(x^{3}y^{6}-\dfrac{q^{6}}{x^{3}y^{6}}\right)+40q\left(xy^{7}+\dfrac{q^{6}}{xy^{7}}\right)+20q\left(-\dfrac{y^{8}}{x}+\dfrac{q^{6}x}{y^{8}}\right)\\
 &\quad+20q^{2}\left(x^{8}y-\dfrac{q^{4}}{x^{8}y}\right)+135q^{2}\left(x^{6}y^{2}+\dfrac{q^{4}}{x^{6}y}\right)+320q^{2}\left(x^{4}y^{3}-\dfrac{q^{4}}{x^{4}y^{3}}\right)\\
 &\quad+540q^{2}\left(x^{2}y^{4}+\dfrac{q^{4}}{x^{2}y^{4}}\right)+150q^{2}\left(y^{5}-\dfrac{q^{4}}{y^{5}}\right)
 +135q^{2}\left(\dfrac{y^{6}}{x^{2}}+\dfrac{q^{4}x^{2}}{y^{6}}\right)\\
 &\quad+40q^{2}\left(\dfrac{y^{7}}{x^{4}}-q^{4}\dfrac{x^{4}}{y^{7}}\right)+5q^{2}\left(\dfrac{y^{8}}{x^{6}}+\dfrac{q^{4}x^{6}}{y^{8}}\right)
 -40q^{3}\left(\dfrac{x^{7}}{y}+\dfrac{q^{2}y}{x^{7}}\right)\\
 &\quad+150q^{3}\left(x^{5}-\dfrac{q^{2}}{x^{5}}\right)+320q^{3}\left(x^{3}y+\dfrac{q^{2}}{x^{3}y}\right)+540q^{3}\left(xy^{2}-\dfrac{q^{2}}{xy^{2}}\right)\\
 &\quad-320q^{3}\left(\dfrac{y^{3}}{x}+\dfrac{q^{2}x}{y^{3}}\right)+320q^{3}\left(-\dfrac{y^{4}}{x^{3}}+\dfrac{q^{2}x^{3}}{y^{4}}\right)
 -100q^{3}\left(\dfrac{y^{5}}{x^{5}}+\dfrac{q^{2}x^{5}}{y^{5}}\right)\\
 &\quad+10q^{3}\left(-\dfrac{y^{6}}{x^{7}}+\dfrac{q^{2}x^{7}}{y^{6}}\right)+20q^{4}\left(\dfrac{y^{4}}{x^{8}}+\dfrac{x^{8}}{y^{4}}\right)
 +801q^{4}\left(-\dfrac{x^{6}}{y^{3}}+\dfrac{y^{3}}{x^{6}}\right)\\
 &\quad+540q^{4}\left(\dfrac{y^{2}}{x^{4}}+\dfrac{x^{4}}{y^{2}}\right)+540q^{4}\left(-\dfrac{x^{2}}{y}+\dfrac{y}{x^{2}}\right)+225q^{4}\Bigg),
\end{align*}
where $x$ and $y$ are as defined in Lemma \ref{key lemma}.

In view of \eqref{eq:P-def}, we may rewrite the above identity as
\begin{align*}
\dfrac{E_{1}^{12}E_{2}^{12}}{E_{5}^{10}E_{10}^{10}} &\sum_{n=0}^{\infty}b(5n+4)q^{n}
 =\Big(5P(4,2)+10P(4,1)+20P(4,0)\Big)\\
&\quad+q\Big(40P(3,2)+100P(3,1)+80P(3,0)+40P(3,-1)-20P(3,-2)\Big)\\
&\quad+q^2\Big(20P(2,3)+135P(2,2)+320P(2,1)+540P(2,0)+150P(2,-1)\\
&\quad\qquad\qquad+135P(2,-2)+40P(2,-3)+5P(2,-4)\Big)\\
&\quad+q^3\Big(-40P(1,3)+150P(1,2)+320P(1,1)+540P(1,0)-320P(1,-1)\\
&\quad-320P(1,-2)-100P(1,-3)-10P(1,-4)\Big)\\
&\quad+q^4\Big(20P(0,4)-80P(0,3)+540P(0,2)-540P(0,1)+225\Big).
\end{align*}

Using Lemma \ref{eq:le:rec} and the initial conditions \eqref{eq:p00}--\eqref{eq:p1-1} to express each summand $P(\cdot,\cdot)$ in terms of $K$ and $q$, we may further simplify the above identity as
\begin{align*}
\dfrac{E_{1}^{12}E_{2}^{12}}{E_{5}^{10}E_{10}^{10}} &\sum_{n=0}^{\infty}b(5n+4)q^{n}=35 K^4 + 280 K^3 q + 1905 K^2 q^2 + 1760 K q^3 + 13825 q^4\\
&\quad\quad - \frac{7040 q^5}{K} + \frac{30480 q^6}{K^2} - \frac{17920 q^7}{K^3} + \frac{8960 q^8}{K^4}\\
&\quad=35\left(K-3q-\dfrac{4q^{2}}{K}\right)^4+700q \left(K-3q-\dfrac{4q^{2}}{K}\right)^3\\
&\quad\quad+6875 q^2 \left(K-3q-\dfrac{4q^{2}}{K}\right)^2 +31250 q^3 \left(K-3q-\dfrac{4q^{2}}{K}\right)+78125 q^4\\
&\quad=35\dfrac{E_{1}^{8}E_{2}^{8}}{E_{5}^{8}E_{10}^{8}}+700q \dfrac{E_{1}^{6}E_{2}^{6}}{E_{5}^{6}E_{10}^{6}}+6875 q^2 \dfrac{E_{1}^{4}E_{2}^{4}}{E_{5}^{4}E_{10}^{4}} +31250 q^3 \dfrac{E_{1}^{2}E_{2}^{2}}{E_{5}^{2}E_{10}^{2}}+78125 q^4,
\end{align*}
where we use \eqref{simp iden} in the last identity. Lemma \ref{le:cpp-dis} follows readily.
\end{proof}

\begin{remark}
It is easy to see that \eqref{eq:cpp-mod5} is a direct consequence of Lemma \ref{le:cpp-dis}.
\end{remark}

\begin{remark}
Let $a(n)$ count the number of \textit{cubic partitions} of $n$, which were introduced by H.C.~Chan \cite{Ch2010}. Its generating function is
\begin{align*}
\sum_{n=0}^{\infty}a(n)q^{n}=\dfrac{1}{E_{1}E_{2}}.
\end{align*}
Using modular forms, H.H.~Chan and Toh \cite{CT2010} and independently Xiong \cite{Xiong2011} found an infinite family of congruences modulo powers of $5$ for $a(n)$. We notice that, by similar arguments to the proof of Lemma \ref{le:cpp-dis}, it is not hard to prove that
\begin{align}\label{cubic gf:5n+2}
\sum_{n=0}^{\infty}a(5n+2)q^{n} &=3\dfrac{E_{5}E_{10}}{E_{1}^{2}E_{2}^{2}}+25q\dfrac{E_{5}^{3}E_{10}^{3}}{E_{1}^{4}E_{2}^{4}}
+125q^{2}\dfrac{E_{5}^{5}E_{10}^{5}}{E_{1}^{6}E_{2}^{6}}.
\end{align}
This gives an elementary proof of an identity due to Xiong \cite{Xiong2011}. We also learnt from Michael Hirschhorn that he was able to give an completely elementary proof \cite{Hir2018b} of the infinite family of congruences modulo powers of $5$ for $a(n)$ obtained by H.H.~Chan and Toh as well as Xiong.
\end{remark}

In view of \eqref{eq:c1} and \eqref{eq:le:cpp-dis}, we have
\begin{align}
\sum_{n=0}^{\infty}\tilde{c}(5n+4)q^{n}&=35\dfrac{E_{5}^{2}E_{10}^{2}}{E_{1}^{5}E_{2}}+700q \dfrac{E_{5}^{4}E_{10}^{4}}{E_{1}^{7}E_{2}^{3}}+6875 q^2 \dfrac{E_{5}^{6}E_{10}^{6}}{E_{1}^{9}E_{2}^{5}}\notag\\
&\quad +31250 q^3 \dfrac{E_{5}^{8}E_{10}^{8}}{E_{1}^{11}E_{2}^{7}}+78125 q^4 \dfrac{E_{5}^{10}E_{10}^{10}}{E_{1}^{13}E_{2}^{9}}.\label{eq:c-1-diss}
\end{align}

\subsection{The final punch}

Now \eqref{cong:c1 mod 25} is almost trivial from \eqref{eq:c-1-diss}. We have
\begin{align*}
\sum_{n=0}^{\infty}\tilde{c}(5n+4)q^{n}\equiv 10\dfrac{E_{5}^{2}E_{10}^{2}}{E_{1}^{5}E_{2}}&\equiv 10 \dfrac{E_{5}E_{10}^{2}}{E_{2}} \pmod{25},
\end{align*}
which contains no terms of the form $q^{5n+3}$. We therefore arrive at \eqref{cong:c1 mod 25}.

Consequently, we have
\begin{align*}
c(125n+99) \equiv0\pmod{25},
\end{align*}
and hence complete the proof of Theorem \ref{th:c125n+99}.

\section{Closing remarks}

In light of \eqref{gf:5n+4}, one has
\begin{align}\label{gf:5n+4-mod 5}
\sum_{n=0}^{\infty}c(5n+4)q^{n} &\equiv\dfrac{E_{10}^{3}}{E_{1}^2E_{2}^2E_{5}}\equiv\dfrac{E_{1}^{3}E_{2}^{3}E_{10}^{2}}{E_{5}}\pmod{5}.
\end{align}
It follows from \cite[Eq.~(9)]{Tang2018} that
\begin{align*}
c(25n+24)\equiv0\pmod{5},
\end{align*}
which is equivalent to
\begin{align*}
\Delta_{k}(25n+24) &\equiv0\pmod{5}, \quad \text{if}\quad k\equiv12\pmod{25}.
\end{align*}
This is discovered by the second author \cite[Theorem 1]{Tang2018}.

Moreover, we have
\begin{align*}
\sum_{n=0}^{\infty}\Delta_{2}(5n+4)q^{n} &=41\dfrac{E_{10}^{3}}{E_{1}E_{2}^{3}E_{5}}+860q\dfrac{E_{10}^{6}}{E_{1}^{4}E_{2}^{2}E_{5}^{2}}+6800q^{2}\dfrac{E_{10}^{9}}{E_{1}^{7}E_{2}E_{5}^{3}}\nonumber\\
 &\quad+24000q^{3}\dfrac{E_{10}^{12}}{E_{1}^{12}E_{5}^{4}}+32000q^{4}\dfrac{E_{2}E_{10}^{15}}{E_{1}^{15}E_{5}^{5}},
\end{align*}
from which it follows that
\begin{align*}
\sum_{n=0}^{\infty}\Delta_{2}(5n+4)q^{n} &\equiv41\dfrac{E_{10}^{3}}{E_{1}E_{2}^{3}E_{5}}=41\dfrac{E_{2}^{2}}{E_{1}}\dfrac{E_{10}}{E_{2}^{5}}\dfrac{E_{10}^{2}}{E_{5}}\equiv \psi(q)\psi(q^{5})\pmod{5},
\end{align*}
where $\psi(q):=\sum\limits_{n=0}^{\infty}q^{n(n+1)/2}$ is one of Ramanujan's classical theta functions. This is first proved by Hirschhorn \cite[Eq.~(2.9)]{Hir2017}.

\section{Acknowledgements}
The second author was supported by the National Natural Science Foundation of China (No.~11501061).

\end{document}